\documentclass{amsart}
\usepackage{amssymb,amsmath,amsthm}

\newtheorem{thm}{Theorem}[section]
\newtheorem{lem}[thm]{Lemma}
\newtheorem{prop}[thm]{Proposition}
\newtheorem{cor}[thm]{Corollary}
\theoremstyle{definition}

\newtheorem{rem}[thm]{Remark}
\numberwithin{equation}{section}

\newcommand{\N}{\mathbb{N}}

\newcommand{\C}{\mathbb{C}}
\newcommand{\bk}{\mathbf{k}}
\newcommand{\bm}{\mathbf{m}}
\newcommand{\bp}{\mathbf{p}}
\newcommand{\bq}{\mathbf{q}}
\newcommand{\Sh}{\mathrm{Sh}}
\newcommand{\lra}{\longrightarrow}

\author{Shuji Yamamoto}
\address{%
Department of Mathematics, Faculty of Science and Technology, 
Keio University\\
3-14-1 Hiyoshi, Kohoku-ku, Yokohama, 223-8522, JAPAN}
\email{yamashu@math.keio.ac.jp}
\thanks{This work was supported in part by Grant-in-Aid for JSPS Fellows (No.\ 09J05093) and by JSPS Grant-in-Aid for Young Scientists (S) (No.\ 21674001). }
\keywords{multiple zeta values, multiple $L$-values, multiple polylogarithms. }
\subjclass[2010]{Primary 11M32, Secondary 40B05}

\title{A sum formula of multiple $L$-values}

\begin{document}

\begin{abstract}
We prove an alternating sum formula of certain multiple $L$-values 
conjectured by Essouabri, Matsumoto and Tsumura, 
which generalizes the sum formula of multiple zeta values. 
The proof relies on the method of partial fraction decomposition. 
\end{abstract}

\maketitle

\section{Introduction}
The multiple zeta values 
\[\zeta(k_1,\ldots,k_n)=\sum_{m_1,\ldots,m_n=1}^\infty 
\frac{1}{m_1^{k_1}(m_1+m_2)^{k_2}\cdots(m_1+\cdots+m_n)^{k_n}}\]
and identities among them have been studied by many authors. 
A basic example of the linear relations among multiple zeta values 
is the sum formula 
\begin{equation}\label{eq:SumFormula}
\sum_{\substack{k_1,\ldots,k_{n-1}\geq 1,k_n\geq 2\\ k_1+\cdots+k_n=k}}
\zeta(k_1,\ldots,k_n)=\zeta(k)
\end{equation}
which holds for any integers $k>n>0$. 
The case of $n=2$, i.e., the identity 
\[\zeta(1,k-1)+\zeta(2,k-2)+\cdots+\zeta(k-2,2)=\zeta(k)\]
goes back to Euler, 
and the general case was proven independently by 
Granville \cite{G97} and Zagier (unpublished). 

Note that the identity \eqref{eq:SumFormula} can be viewed as 
a decomposition of the Riemann zeta value $\zeta(k)$ 
into a finite sum of multiple zeta values. 
Then a natural question arises: 
Are there similar decomposition formulas 
for Dirichlet $L$-values 
$L(k,\chi)=\sum_{m=1}^\infty\frac{\chi(m)}{m^k}$, 
or polylogarithm functions 
$\mathit{Li}_k(t)=\sum_{m=1}^\infty\frac{t^m}{m^k}$ ?

In \cite{EMT10}, Essouabri, Matsumoto and Tsumura found 
such formulas for $L$-values (or, in fact, fairly general Dirichlet series) 
when $n\leq 3$, and made a conjecture for general $n$. 
The main purpose of this paper is to prove their conjecture. 

To state the results, we introduce some notation. 
Let $\N=\{1,2,\ldots\}$ be the set of positive integers. 
We take and fix $n,k\in\N$ such that $k>n$, and put 
\[I(r)=I_{k,n}(r)=\biggl\{\bk=(k_1,\ldots,k_n)\in\N^n\biggm|
\begin{array}{c}k_n\geq 2,\ k_1+\cdots+k_n=k,\\ 
k_1=\cdots=k_{r-1}=1
\end{array}\biggr\}\]
for $r=1,2,\ldots,n,n+1$. 
Thus we have 
\[I(1)\supset I(2)\supset \cdots\supset I(n)\supset I(n+1)=\emptyset, \]
and $I(1)$ is the set of all admissible indices for 
the multiple zeta values $\zeta(\bk)$ of weight $k$ and depth $n$. 

We consider the multiple $L$-values of the following type: 
\[L(\bk,f,J)=\sum_{m_1,\ldots,m_n=1}^\infty 
\frac{f\bigl(\sum_{j\in J}m_j\bigr)}
{m_1^{k_1}(m_1+m_2)^{k_2}\cdots(m_1+\cdots+m_n)^{k_n}}.\]
Here $f\colon\N\lra\C$ is a function and 
$J$ is a nonempty subset of $\{1,\ldots,n\}$. 
Observe that, when $f$ is periodic and $J$ is of the form 
$\{j\}$ or $\{1,\ldots,j\}$, then these $L(\bk,f,J)$ may be 
viewed as special cases of the multiple $L$-values introduced 
by Arakawa-Kaneko \cite{AK04}. 

Our main result is the following: 

\begin{thm}\label{thm:MainThm1}
The identity 
\begin{equation}\label{eq:MainThm1}
\sum_{\emptyset\ne J\subset\{1,\ldots,n\}}
\sum_{\bk\in I(\max J)}
(-1)^{\lvert J\rvert-1}L(\bk,f,J)=\sum_{m=1}^\infty\frac{f(m)}{m^k} 
\end{equation}
holds. Here $f\colon\N\lra\C$ is a function such that 
$L(\bk,f,J)$ are absolutely convergent 
for all $J$ and all $\bk\in I(\max J)$. 
\end{thm}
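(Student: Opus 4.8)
The plan is to reorganize the left-hand side by the value $r=\max J$ and to carry out the combinatorial sum over $J'=J\setminus\{r\}$ before touching the analytic sum over $\bm=(m_1,\dots,m_n)$. Grouping by $r$ and writing $J=\{r\}\cup J'$ with $J'\subseteq\{1,\dots,r-1\}$ and $(-1)^{\lvert J\rvert-1}=(-1)^{\lvert J'\rvert}$, the left-hand side equals $\sum_{r=1}^{n}\sum_{\bk\in I(k,r)}\sum_{J'\subseteq\{1,\dots,r-1\}}(-1)^{\lvert J'\rvert}L(\bk,f,\{r\}\cup J')$. Setting $M_i=m_1+\cdots+m_i$ and using that every $\bk\in I(k,r)$ forces $k_1=\cdots=k_{r-1}=1$, the prefix $M_1^{k_1}\cdots M_{r-1}^{k_{r-1}}=M_1\cdots M_{r-1}$, and indeed the whole denominator, is independent of $J'$. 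Hence the $J'$-sum acts only on the numerator $f(m_r+\sum_{j\in J'}m_j)$ and collapses it into the iterated finite difference
\[
N_r(\bm)=\sum_{J'\subseteq\{1,\dots,r-1\}}(-1)^{\lvert J'\rvert}f\Bigl(m_r+\sum_{j\in J'}m_j\Bigr),
\]
isolating all dependence on $f$ into this single combination.

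The analytic heart of the argument is the elementary identity
\[
\sum_{\substack{a+b=s\\ a,b\ge 1}}\frac{1}{x^{a}y^{b}}=\frac{1}{y-x}\Bigl(\frac{1}{x^{\,s-1}}-\frac{1}{y^{\,s-1}}\Bigr)\qquad(x\ne y),
\]
which I would iterate to perform the sum over the free exponents $k_r,\dots,k_n$ of $\bk\in I(k,r)$. Each application turns a consecutive pair $M_i^{-k_i}M_{i+1}^{-k_{i+1}}$, summed over $k_i+k_{i+1}=s$, into $(M_i^{-(s-1)}-M_{i+1}^{-(s-1)})/(M_{i+1}-M_i)$ with $M_{i+1}-M_i=m_{i+1}$, and these differences are precisely what I expect to telescope against the first-power prefix $1/(M_1\cdots M_{r-1})$ and against $N_r(\bm)$. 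Running the telescoping through the sum over $\bm$ and then over $r$, the intermediate terms should cancel in pairs, leaving only the contributions in which the argument of $f$ coincides with the one surviving denominator; equivalently, for each fixed $p\ge 1$ the total coefficient of $f(p)$ should collapse to $p^{-k}$, which is exactly the right-hand side $\sum_{m\ge 1}f(m)/m^{k}$.

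The main obstacle is analytic rather than combinatorial: the partial-fraction identity manufactures factors $1/(M_{i+1}-M_i)$ and, after summation, harmonic-type pieces such as $\sum_{m}1/m$ that diverge even though every $L(\bk,f,J)$ converges absolutely by hypothesis. As the depth-two case already reveals, these spurious divergences cancel only after the different $J$ are combined, so the sum cannot be split term by term. I would therefore truncate to $M_n\le X$, perform all partial-fraction and telescoping manipulations on the finite truncated sums, and only afterwards let $X\to\infty$, invoking the assumed absolute convergence to identify the limit with the genuine left-hand side. The real work lies in the bookkeeping that matches the telescoped boundary terms across consecutive $r$: in depth two, for instance, the tail sums $\sum_{M>p}M^{-\ell}$ with $\ell\ge 2$ cancel between the $J=\{1\}$ term and the partial-fraction expansion of the $J=\{2\}$ term, while the leftover harmonic pieces coming from $J=\{2\}$ and $J=\{1,2\}$ combine to $p^{-(k-1)}\cdot p^{-1}=p^{-k}$. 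Reproducing this cancellation pattern uniformly in $n$ is the step I expect to be delicate.
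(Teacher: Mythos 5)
Your opening reduction is sound and in fact mirrors the paper's own reformulation: grouping by $r=\max J$ and summing over $J'\subseteq\{1,\dots,r-1\}$ collapses the numerator into the finite difference $N_r(\bm)$, which is exactly the identity the paper derives when it rewrites the left-hand side for $f(m)=t^m$ as $1-(1-t^{m_1})\cdots(1-t^{m_r})$. Your two-variable partial-fraction identity is also correct, and the overall strategy (partial fractions plus telescoping, in the spirit of Granville) is the one the paper follows. A small remark on your convergence worries: the paper sidesteps the truncation at $M_n\le X$ entirely by first reducing to functions $f$ supported at a single integer $m$ (Theorem \ref{thm:MainThm2}); all the resulting sums $S(k,m,J)$ have positive terms, so every rearrangement is legitimate, and the general case follows by one application of the assumed absolute convergence. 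You may want to adopt that device rather than tracking boundary terms of truncated sums.

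The genuine gap is that the central cancellation --- that after iterating the partial-fraction identity over the free exponents $k_r,\dots,k_n$ and summing over $\bm$ and $r$, the total coefficient of $f(p)$ collapses to $p^{-k}$ --- is asserted rather than proved. You verify it at depth two and explicitly defer the general case (``the step I expect to be delicate''), but this is where essentially all of the content of the theorem lives. In particular, your plan does not explain how to combine the contributions of the different sets $J'$ \emph{inside} the denominator structure: once $J'$ varies, the variables $m_j$ with $j\in J'$ play a different role from the others, and the paper needs a genuine shuffle identity (Lemma \ref{lem:shuffle}, proved by induction from $\frac1{ab}=\frac1{a+b}(\frac1a+\frac1b)$) to re-sum over all $J$ with fixed $\lvert J\rvert$ and $\max J$. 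It then needs a separate telescoping induction (Lemma \ref{lem:r<n}, using $\sum_{i=1}^K\frac{1}{x^i}\frac{y}{(x+y)^{K+1-i}}+\frac{1}{(x+y)^K}=\frac{1}{x^K}$) to perform the sum over $\bk\in I(k,r)$ and over $r$, and finally an inclusion--exclusion over the chains $Q_l$ to handle the alternating sum over $\lvert J\rvert$. None of these three mechanisms is visible in your sketch, and without them the claimed uniform-in-$n$ cancellation is an unproved assertion, not a proof.
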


A sufficient condition for absolute convergence is that 
$f(m)=O(m^{k-n-\varepsilon})$ for some $\varepsilon>0$. 
In fact, if we put $r=\max J$, then any $\bk=(k_1,\ldots,k_n)\in I(r)$ 
satisfies $(k_r-1)+\cdots+(k_n-1)=k-n$. Hence we have 
\begin{align*}
&\sum_{m_1,\ldots,m_n=1}^\infty 
\frac{\bigl|f\bigl(\sum_{j\in J}m_j\bigr)\bigr|}
{m_1^{k_1}\cdots(m_1+\cdots+m_n)^{k_n}}\\
&\leq C\sum_{m_1,\ldots,m_n=1}^\infty 
\frac{\bigl(\sum_{j\in J}m_j\bigr)^{k-n-\varepsilon}}
{m_1^{k_1}\cdots(m_1+\cdots+m_n)^{k_n}}\\
&\leq C\sum_{m_1,\ldots,m_n=1}^\infty 
\frac{(m_1+\cdots+m_r)^{k_r-1}\cdots(m_1+\cdots+m_n)^{k_n-1-\varepsilon}}
{m_1^{k_1}\cdots(m_1+\cdots+m_n)^{k_n}}\\
&=C\sum_{m_1,\ldots,m_n=1}^\infty 
\frac{1}{m_1\cdots(m_1+\cdots+m_{n-1})(m_1+\cdots+m_n)^{1+\varepsilon}}
\end{align*}
for some constant $C$, ant it is well-known that 
the rightmost side is convergent. 

\smallskip 

As noted above, the formula \eqref{eq:MainThm1} was conjectured 
by Essouabri-Matsumoto-Tsumura \cite{EMT10}, and proven for $n=2$ and $3$. 
As they remarked, when $f(m)=1$ identically, 
\eqref{eq:MainThm1} reduces to the sum formula \eqref{eq:SumFormula} 
for multiple zeta values. 
Our proof of \eqref{eq:MainThm1}, 
which relies essentially on the partial fraction decomposition such as 
$\frac{1}{ab}=\frac{1}{a+b}\bigl(\frac{1}{a}+\frac{1}{b}\bigr)$, 
is similar to that of \eqref{eq:SumFormula} by Granville, 
but makes no use of generating functions. 

\bigbreak 

To obtain a decomposition formula for the polylogarithm $\mathit{Li}_k(t)$, 
we apply Theorem \ref{thm:MainThm1} to the function $f(m)=t^m$, 
where $t$ is any complex number with $\lvert t\rvert\leq 1$. 
Then the left-hand side of \eqref{eq:MainThm1} becomes 
\begin{equation}\label{eq:f(m)=t^m LHS}
\sum_{\emptyset\ne J\subset\{1,\ldots,n\}}
\sum_{\bk\in I(\max J)}
\sum_{m_1,\ldots,m_n=1}^\infty 
\frac{(-1)^{\lvert J\rvert-1}t^{\sum_{j\in J}m_j}}
{m_1^{k_1}\cdots(m_1+\cdots+m_n)^{k_n}}. 
\end{equation}
Put $I'(r)=I(r)\setminus I(r+1)$ ($r=1,\ldots,n$). 
If we fix $\bk\in I'(r)$ and $m_1,\ldots,m_n\geq 1$, 
the corresponding sum of numerators in \eqref{eq:f(m)=t^m LHS} is 
\begin{align*}
\sum_{J\ne\emptyset,\max J\leq r}
(-1)^{\lvert J\rvert-1}t^{\sum_{j\in J}m_j}
&=-\sum_{J\ne\emptyset,\max J\leq r}\prod_{j\in J}(-t^{m_j})\\
&=-\bigl\{(1-t^{m_1})\cdots(1-t^{m_r})-1\bigr\}\\
&=1-(1-t^{m_1})\cdots(1-t^{m_r}). 
\end{align*}
Thus Theorem \ref{thm:MainThm1} implies 
(in fact, is equivalent to) the following formula for the polylogarithm: 
\begin{thm}\label{thm:MainThm2}
For any complex number $t$ such that $\lvert t\rvert\leq 1$, we have 
\begin{equation}\label{eq:MainThm2}
\sum_{r=1}^n\sum_{\bk\in I'(r)}\sum_{m_1,\ldots,m_n=1}^\infty 
\frac{1-(1-t^{m_1})\cdots(1-t^{m_r})}
{m_1^{k_1}\cdots(m_1+\cdots+m_n)^{k_n}}
=\mathit{Li}_k(t). 
\end{equation}
\end{thm}

We can make it more symmetric by taking 
the difference of \eqref{eq:MainThm2} and itself for $t=1$ 
(the latter is just the sum formula \eqref{eq:SumFormula}). 

\begin{cor}\label{cor:CorMainThm2}
We have 
\begin{equation}\label{eq:CorMainThm2}
\sum_{r=1}^n\sum_{\bk\in I'(r)}\sum_{m_1,\ldots,m_n=1}^\infty
\frac{(1-t^{m_1})\cdots (1-t^{m_r})}{m_1^{k_1}\cdots(m_1+\cdots+m_n)^{k_n}}
=\sum_{m=1}^\infty\frac{1-t^m}{m^k}. 
\end{equation}
\end{cor}

\bigbreak 

To prove \eqref{eq:MainThm1}, 
it suffices to consider functions $f$ supported at a single element $m\in\N$. 
To state explicitly, we put 
\begin{gather*}
M(m,J)=\biggl\{\bm=(m_1,\ldots,m_n)\in\N\biggm|
\sum_{j\in J}m_j=m\biggr\},\notag \\
S(m,J)=\sum_{\bk\in I(\max J)}\sum_{\bm\in M(m,J)}
\frac{1}{m_1^{k_1}(m_1+m_2)^{k_2}\cdots(m_1+\cdots+m_n)^{k_n}} 
\end{gather*}
for $m\in\N$ and $\emptyset\ne J\subset\{1,\ldots,n\}$ 
(recall that $n$ and $k$ are fixed). 
Then Theorem \ref{thm:MainThm1} is equivalent to the following:

\begin{prop}\label{prop:MainThm3}
For any $m\in\N$, we have 
\begin{equation}\label{eq:MainThm3}
\sum_{\emptyset\ne J\subset\{1,\ldots,n\}}
(-1)^{\lvert J\rvert-1}S(m,J)=\frac{1}{m^k}. 
\end{equation}
\end{prop}

From the next section, we will prove the identity \eqref{eq:MainThm3} 
by computing the left-hand side in three steps: 
\begin{enumerate}
\item[(i)] For $1\leq l\leq r\leq n$, 
compute the sum over $J$ such that $\lvert J\rvert=l$ and $\max J=r$ 
(Proposition \ref{prop:fix l,r}). 
\item[(ii)] Sum up the values of (i) for $r=l,\ldots,n$ 
(Proposition \ref{prop:fix l}). 
\item[(iii)] Sum up alternatingly the values of (ii) for $l=1,\ldots,n$. 
\end{enumerate}

\section{First step}
First we prepare a lemma. 
Let $N$ be a positive integer and $\underline{s}=(s_0,s_1,\ldots,s_\ell)$ 
a non-decreasing sequence of integers such that $s_0=0$ and $s_\ell=N$. 
Then we denote by $\Sh(\underline{s})$ the set of all shuffles of 
the sequences $(s_{j-1}+1,\ldots,s_j)$, 
i.e.\ all bijections $\sigma\colon\{1,\ldots,N\}\to\{1,\ldots,N\}$ 
which are increasing on each subset $\{s_{j-1}+1,\ldots,s_j\}$ 
for $j=1,\ldots,\ell$. 

\begin{lem}\label{lem:shuffle}
Let $N$ and $\underline{s}=(s_0,s_1,\ldots,s_\ell)$ be as above. 
Then, for indeterminates $x_1,\ldots,x_N$, the identity 
\begin{equation}\label{eq:shuffle}
\sum_{\sigma\in\Sh(\underline{s})}
\prod_{i=1}^N\frac{1}{x_{\sigma^{-1}(1)}+\cdots+x_{\sigma^{-1}(i)}}
=\prod_{j=1}^\ell\prod_{i=1}^{s_j-s_{j-1}}
\frac{1}{x_{s_{j-1}+1}+\cdots+x_{s_{j-1}+i}}
\end{equation}
holds. 
\end{lem}
\begin{proof}
We use induction on $\ell$ and $N$. 
When $\ell=1$ or $N=1$, the claim is trivial. 

In general case, we may assume that $s_{j-1}<s_j$ for all $j=1,\ldots,\ell$ 
(if not, we can reduce $\ell$ by eliminating redundant $s_j$). 
Then the set $\Sh(\underline{s})$ is partitioned into $\ell$ subsets 
\[\Sh(\underline{s})_j
=\bigl\{\sigma\in\Sh(\underline{s})\bigm|\sigma^{-1}(N)=s_j\bigr\} 
\quad (j=1,\ldots,\ell). \]
The induction hypothesis for $N-1$ variables (with $x_{s_j}$ omitted) 
implies that 
\begin{align*}
\sum_{\sigma\in\Sh(\underline{s})_j}
&\prod_{i=1}^N\frac{1}{x_{\sigma^{-1}(1)}+\cdots+x_{\sigma^{-1}(i)}}\\
&=\prod_{\substack{a=1\\ a\ne j}}^\ell
\prod_{i=1}^{s_a-s_{a-1}}\frac{1}{x_{s_{a-1}+1}+\cdots+x_{s_{a-1}+i}}
\times\prod_{i=1}^{s_j-1-s_{j-1}}
\frac{1}{x_{s_{j-1}+1}+\cdots+x_{s_{j-1}+i}}\\ 
&\qquad\times\frac{1}{x_1+\cdots+x_N}\\
&=\frac{x_{s_{j-1}+1}+\cdots+x_{s_j}}{x_1+\cdots+x_N}
\prod_{a=1}^\ell\prod_{i=1}^{s_a-s_{a-1}}
\frac{1}{x_{s_{a-1}+1}+\cdots+x_{s_{a-1}+i}}. 
\end{align*}
By summing up over $j$, we obtain the identity \eqref{eq:shuffle}. 
\end{proof}

\begin{rem}
In the proof of Proposition \ref{prop:fix l,r} below, 
we only need Lemma \ref{lem:shuffle} for $\ell=2$. 
On the other hand, Lemma \ref{lem:shuffle} also includes 
an identity of Littlewood \cite[p.~85]{L50} 
as a special case in which $s_j=j$ (hence $\ell=N$). 
The author thanks the referee for giving him 
the information about Littlewood's identity. 
\end{rem}

Now let us start the computation of the sum in \eqref{eq:MainThm3}. 
Fix $n,k\in\N$ such that $k\geq n+1$, and $m\in\N$. 
For $l,r\in\N$ such that $1\leq l\leq r\leq n$, we put 
\[A_l=\sum_{\substack{(p_1,\ldots,p_{l-1})\in\N^{l-1}\\ p_1+\cdots+p_{l-1}<m}}
\frac{1}{p_1(p_1+p_2)\cdots(p_1+\cdots+p_{l-1})}, \]
\[B_{l,r}(p_l,\ldots,p_{r-1})
=\frac{1}{p_l(p_l+p_{l+1})\cdots(p_l+\cdots+p_{r-1})},\] 
and 
\begin{multline*}
C_{l,r}(p_l,\ldots,p_{n-1},k_r,\ldots,k_n)\\
=\frac{1}{(m+p_l+\cdots+p_{r-1})^{k_r}\cdots(m+p_l+\cdots+p_{n-1})^{k_n}}. 
\end{multline*}
Note that 
\[A_l=\sum_{\substack{(p_1,\ldots,p_{l-1})\in\N^{l-1}\\ p_1+\cdots+p_{l-1}<m}}
B_{1,l}(p_1,\ldots,p_{l-1}). \]

\begin{prop}\label{prop:fix l,r}
\[\sum_{\substack{\lvert J\rvert=l\\ \max J=r}}S(m,J)
=A_l\sum_{\bp=(p_l,\ldots,p_{n-1})\in\N^{n-l}}B_{l,r}(\bp)
\sum_{\bk\in I(r)}C_{l,r}(\bp,\bk). \]
\end{prop}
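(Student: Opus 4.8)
The plan is to separate the variables attached to positions $1,\ldots,r$ from those attached to positions $r+1,\ldots,n$, and then to recognize the summation over $J$ as a shuffle. Write $M_i=m_1+\cdots+m_i$. For $\bk\in I(k,r)$ one has $k_1=\cdots=k_{r-1}=1$, so the summand of $S(k,m,J)$ factors as $\frac{1}{M_1\cdots M_{r-1}}\cdot\prod_{i=r}^nM_i^{-k_i}$. Every admissible $J$ here satisfies $\max J=r$, hence $\sum_{j\in J}m_j=m$ uses only positions $\le r$, and the tail $\prod_{i=r}^nM_i^{-k_i}$ together with the summations over $m_{r+1},\ldots,m_n$ and over $\bk\in I(k,r)$ depends on positions $1,\ldots,r$ only through $M_r$. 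Making this precise, I would set
\[
\phi(N):=\sum_{\bk\in I(k,r)}\ \sum_{m_{r+1},\ldots,m_n\ge1}\ \prod_{i=r}^n\bigl(N+m_{r+1}+\cdots+m_i\bigr)^{-k_i},
\]
so that the substitution $p_r=m_{r+1},\ldots,p_{n-1}=m_n$ gives $m+(p_l+\cdots+p_{i-1})=M_i$ and identifies $\sum_{p_r,\ldots,p_{n-1}}\sum_{\bk}C_{l,r}(\bp,\bk)$ with $\phi(m+p_l+\cdots+p_{r-1})$. Thus the Proposition is equivalent to the ``positions $1,\ldots,r$'' identity
\[
\sum_{\substack{|J|=l\\\max J=r}}\ \sum_{\substack{m_1,\ldots,m_r\ge1\\\sum_{j\in J}m_j=m}}\frac{\phi(M_r)}{M_1\cdots M_{r-1}}
=A_l\sum_{p_l,\ldots,p_{r-1}\ge1}\frac{\phi\bigl(m+p_l+\cdots+p_{r-1}\bigr)}{p_l(p_l+p_{l+1})\cdots(p_l+\cdots+p_{r-1})}. \tag{$\star$}
\]

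The engine is a pointwise shuffle identity driven by the partial fraction $\frac{1}{ab}=\frac{1}{a+b}\bigl(\frac1a+\frac1b\bigr)$. For fixed positive integers $x_1,\ldots,x_a$ and $y_1,\ldots,y_b$ with partial sums $X_i,Y_j$, and for each interleaving of the two sequences into $z_1,\ldots,z_{a+b}$ that preserves the two internal orders, let $Z_1,\ldots,Z_{a+b}$ be the resulting partial sums; I claim
\[
\sum_{\mathrm{shuffles}}\frac{1}{Z_1Z_2\cdots Z_{a+b}}=\frac{1}{X_1\cdots X_a}\cdot\frac{1}{Y_1\cdots Y_b}.
\]
I would prove this by induction on $a+b$: the last letter $z_{a+b}$ is either $x_a$ or $y_b$, in both cases $Z_{a+b}=X_a+Y_b$, and grouping the two families of shuffles and applying the induction hypothesis to the first $a+b-1$ letters reduces the statement to $\frac{1}{X_aY_b}=\frac{1}{X_a+Y_b}\bigl(\frac{1}{X_a}+\frac{1}{Y_b}\bigr)$.

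To finish, regard the marked values $m_{j_1},\ldots,m_{j_{l-1}}$ (with the last marked value $m_r=m-\sum_{s<l}m_{j_s}$ determined) and the unmarked values $p_l,\ldots,p_{r-1}$ as fixed. Choosing the set $J$ with $|J|=l$ and $\max J=r$ is precisely choosing a shuffle of $(m_{j_1},\ldots,m_{j_{l-1}})$ with $(p_l,\ldots,p_{r-1})$ into the first $r-1$ slots, the slot $r$ being reserved for $m_r$; under this correspondence $M_1,\ldots,M_{r-1}$ are exactly the partial sums of the shuffled word of length $r-1$, while $M_r=m+p_l+\cdots+p_{r-1}$ is constant over the shuffle. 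Applying the lemma with $a=l-1$ and $b=r-l$ collapses $\sum_J\frac{1}{M_1\cdots M_{r-1}}$ to the product $\frac{1}{m_{j_1}(m_{j_1}+m_{j_2})\cdots(m_{j_1}+\cdots+m_{j_{l-1}})}\cdot B_{l,r}(\bp)$. Summing over $m_{j_1},\ldots,m_{j_{l-1}}\ge1$ with $m_{j_1}+\cdots+m_{j_{l-1}}<m$ reproduces $A_l$, and the remaining sum over $p_l,\ldots,p_{r-1}$ yields exactly $(\star)$; re-expanding $\phi$ then gives the asserted equality with $\sum_{\bp}B_{l,r}(\bp)\sum_{\bk}C_{l,r}(\bp,\bk)$.

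The analytic content lies entirely in the elementary shuffle lemma; the main obstacle is bookkeeping, namely checking that the map from admissible $J$ to shuffles is a bijection respecting the order conventions (marked, resp.\ unmarked, values read off in increasing position order) and that the resulting factorization matches the indexing of $A_l$, $B_{l,r}$, and $C_{l,r}$. All the regroupings and factorizations are legitimate because the hypotheses of Theorem \ref{thm:MainThm1} ensure absolute convergence, so the multiple series may be rearranged freely.
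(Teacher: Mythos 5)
Your proposal is correct and follows essentially the same route as the paper: the same bijection between sets $J$ with $\lvert J\rvert=l$, $\max J=r$ and shuffles of the $l-1$ ``marked'' entries with the $r-l$ unmarked ones, and the same partial-fraction shuffle lemma (proved by the same induction on the last letter) to collapse $\sum_J\frac{1}{M_1\cdots M_{r-1}}$ into $B_{1,l}(\bp)B_{l,r}(\bp)$. The auxiliary function $\phi(N)$ is only a packaging device for the factor $\sum_{\bk}C_{l,r}(\bp,\bk)$ that the paper carries along explicitly.
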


\begin{rem}
Here the notation $B_{l,r}(\bp)$ for $\bp=(p_l,\ldots,p_{n-1})\in\N^{n-l}$ 
means $B_{l,r}(p_l,\ldots,p_{r-1})$. 
In other words, to evaluate $B_{l,r}$ at $\bp$, 
we simply omit the redundant components $p_r,\ldots,p_{n-1}$. 
We also use similar notation in the following. 
\end{rem}

\begin{proof}
There is a map 
\begin{align*}
\Sh(0,l-1,r-1)&\lra \bigl\{J\subset\{1,\ldots,n\}\bigm|
\lvert J\rvert=l,\,\max J=r\bigr\}\\ 
\sigma&\longmapsto \sigma\bigl(\{1,\ldots,l-1\}\bigr)\cup\{r\}
\end{align*}
which is clearly bijective. 
If $\sigma\in\Sh(0,l-1,r-1)$ corresponds to $J$ under this map, 
each $\bm=(m_1,\ldots,m_n)\in M(m,J)$ can be expressed as 
\[m_j=\begin{cases}
p_{\sigma^{-1}(j)} & (1\leq j\leq r-1),\\
m-(p_1+\cdots+p_{l-1}) & (j=r),\\
p_{j-1} & (j>r)
\end{cases}\]
by a unique $\bp=(p_1,\ldots,p_{n-1})\in\N^{n-1}$ such that 
$p_1+\cdots+p_{l-1}<m$. If this is the case, we have 
\[m_1+m_2+\cdots+m_j=m+p_l+\cdots+p_{j-1}\]
for $j\geq r$. 
Hence we can rewrite the definition of $S(m,J)$ as
\begin{equation}\label{eq:S(m,J) by sigma}
\begin{split}
&S(m,J)\\
&=\sum_{\bk\in I(r)}
\sum_{\substack{\bp\in\N^{n-1}\\ p_1+\cdots+p_{l-1}<m}}
\frac{1}{p_{\sigma^{-1}(1)}\cdots(p_{\sigma^{-1}(1)}
+\cdots+p_{\sigma^{-1}(r-1)})}\\
&\hspace{60pt}\times\frac{1}{(m+p_l+\cdots+p_{r-1})^{k_r}\cdots
(m+p_l+\cdots+p_{n-1})^{k_n}}\\
&=\sum_{\substack{\bp\in\N^{n-1}\\ p_1+\cdots+p_{l-1}<m}}
\frac{1}{p_{\sigma^{-1}(1)}\cdots(p_{\sigma^{-1}(1)}
+\cdots+p_{\sigma^{-1}(r-1)})}
\sum_{\bk\in I(r)}C_{l,r}(\bp,\bk). 
\end{split}
\end{equation}
By Lemma \ref{lem:shuffle}, the equality 
\begin{align*}
\sum_{\sigma\in\Sh(0,l-1,r-1)}&\frac{1}
{p_{\sigma^{-1}(1)}\cdots(p_{\sigma^{-1}(1)}+\cdots+p_{\sigma^{-1}(r-1)})}\\
&=\frac{1}{p_1(p_1+p_2)\cdots(p_1+\cdots+p_{l-1})}
\frac{1}{p_l(p_l+p_{l+1})\cdots(p_l+\cdots+p_{r-1})}\\
&=B_{1,l}(\bp)B_{l,r}(\bp)
\end{align*}
holds for each $\bp=(p_1,\ldots,p_{n-1})\in\N^{n-1}$. 
Therefore, by summing up \eqref{eq:S(m,J) by sigma}, we conclude 
\begin{align*}
\sum_{\substack{\lvert J\rvert=l\\ \max J=r}}S(m,J)
&=\sum_{\substack{\bp=(p_1,\ldots,p_{n-1})\in\N^{n-1}\\ p_1+\cdots+p_{l-1}<m}}
B_{1,l}(\bp)B_{l,r}(\bp)\sum_{\bk\in I(r)}C_{l,r}(\bp,\bk)\\
&=A_l\sum_{\bp=(p_l,\ldots,p_{n-1})\in\N^{n-l}}B_{l,r}(\bp)
\sum_{\bk\in I(r)}C_{l,r}(\bp,\bk) 
\end{align*}
as required. 
\end{proof}

\section{Second step}
The purpose in this section is to compute $\sum_{\lvert J\rvert=l}S(m,J)$ 
for $1\leq l\leq n$. By Proposition \ref{prop:fix l,r}, we have 
\begin{equation}\label{eq:|J|=l}
\sum_{\lvert J\rvert=l}S(m,J)=
A_l\sum_{\bp\in\N^{n-l}}D_l(\bp), 
\end{equation}
where we put
\[D_l(\bp)=\sum_{r=l}^nB_{l,r}(\bp)\sum_{\bk\in I(r)}C_{l,r}(\bp,\bk). \]

\begin{lem}\label{lem:r<n}
For $t=l,\ldots,n-1$, 
\begin{equation}\label{eq:r<n}
\sum_{r=l}^tB_{l,r}(\bp)\sum_{\bk\in I(r)}C_{l,r}(\bp,\bk)
=B_{l,t}(\bp)\sum_{\bk\in I(t)}
\frac{1}{m^{k_t}}C_{l,t+1}(\bp,\bk). 
\end{equation}
\end{lem}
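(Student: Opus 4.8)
The plan is to prove \eqref{eq:r<n} by induction on $t$, treating it as an identity between functions of the fixed tuple $\bp=(p_l,\ldots,p_{n-1})$; throughout I abbreviate $Q=p_l+\cdots+p_t$, which is a positive integer, so that $B_{l,t+1}(\bp)=B_{l,t}(\bp)/Q$ and, since $m+p_l+\cdots+p_t=m+Q$, one has $C_{l,t+1}(\bp,\bk)=(m+Q)^{-k_{t+1}}C_{l,t+2}(\bp,\bk)$. For the base case $t=l$ the left-hand side is $B_{l,l}(\bp)\sum_{\bk\in I(k,l)}C_{l,l}(\bp,\bk)$, and because $B_{l,l}(\bp)=1$ and the empty sum $m+p_l+\cdots+p_{l-1}=m$ gives $C_{l,l}(\bp,\bk)=m^{-k_l}C_{l,l+1}(\bp,\bk)$, this already equals the right-hand side term by term.

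For the inductive step, suppose \eqref{eq:r<n} holds for some $t$ with $l\le t\le n-2$. Splitting off the $r=t+1$ summand and applying the induction hypothesis to the remaining terms, the left-hand side for $t+1$ becomes
\begin{equation*}
B_{l,t}(\bp)\sum_{\bk\in I(k,t)}\frac{C_{l,t+1}(\bp,\bk)}{m^{k_t}}
+B_{l,t+1}(\bp)\sum_{\bk\in I(k,t+1)}C_{l,t+1}(\bp,\bk).
\end{equation*}
Substituting $B_{l,t+1}=B_{l,t}/Q$ and $C_{l,t+1}=(m+Q)^{-k_{t+1}}C_{l,t+2}$ and cancelling the common factor $B_{l,t}(\bp)$, the step reduces to the scalar identity
\begin{equation*}
\sum_{\bk\in I(k,t)}\frac{C_{l,t+2}(\bp,\bk)}{m^{k_t}(m+Q)^{k_{t+1}}}
+\frac{1}{Q}\sum_{\bk\in I(k,t+1)}\frac{C_{l,t+2}(\bp,\bk)}{(m+Q)^{k_{t+1}}}
=\frac{1}{Q}\sum_{\bk\in I(k,t+1)}\frac{C_{l,t+2}(\bp,\bk)}{m^{k_{t+1}}}.
\end{equation*}

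The factor $C_{l,t+2}(\bp,\bk)$ depends only on the block $(k_{t+2},\ldots,k_n)$, so I would fix this block and reduce to the inner sum over $(k_t,k_{t+1})$. Crucially, since $t+1\le n-1$ the entry $k_n$ lies in the fixed block, so the admissibility constraint $k_n\ge2$ is carried entirely by it and never touches the telescoped variables. Writing $s=k_t+k_{t+1}$, the elements of $I(k,t)$ contribute all $k_t,k_{t+1}\ge1$ with $k_t+k_{t+1}=s$, while $I(k,t+1)$ forces $k_t=1$, hence $k_{t+1}=s-1$. Since the set of admissible fixed blocks is literally the same on both sides, the step collapses to the purely scalar telescoping identity
\begin{equation*}
\sum_{\substack{i,j\ge1\\ i+j=s}}\frac{1}{m^i(m+Q)^j}+\frac{1}{Q(m+Q)^{s-1}}=\frac{1}{Qm^{s-1}},
\end{equation*}
which I would establish from the partial fraction relation $\frac{1}{m^i(m+Q)^j}=\frac{1}{Q}\bigl(\frac{1}{m^i(m+Q)^{j-1}}-\frac{1}{m^{i-1}(m+Q)^j}\bigr)$ — the same decomposition $\frac{1}{ab}=\frac{1}{a+b}(\frac1a+\frac1b)$ that drives the whole paper — whose sum over $i+j=s$ telescopes to $\frac1Q\bigl(m^{-(s-1)}-(m+Q)^{-(s-1)}\bigr)$.

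The main obstacle is bookkeeping rather than analysis: one must keep the index blocks straight and verify that the admissible fixed blocks $(k_{t+2},\ldots,k_n)$ match on both sides so the sums can be compared term by term, and confirm that the constraint $k_n\ge2$ stays inert throughout — which is precisely why the lemma is restricted to $t\le n-1$, the range in which $k_n$ is never the telescoped variable. Once the reduction to the telescoping identity is in place, the remaining computation is routine.
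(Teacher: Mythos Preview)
Your proof is correct and follows essentially the same route as the paper: induction on $t$ with the same base case, and an inductive step that reduces to the telescoping identity $\sum_{i=1}^{K}\frac{1}{x^i}\frac{y}{(x+y)^{K+1-i}}+\frac{1}{(x+y)^K}=\frac{1}{x^K}$ (your displayed identity with $x=m$, $y=Q$, $K=s-1$). The only differences are cosmetic---you index the induction as $t\to t+1$ rather than $t-1\to t$, and you fix the tail block $(k_{t+2},\ldots,k_n)$ directly where the paper instead parametrizes $I(k,t-1)$ by $I(k,t)\times\{1,\ldots,k_t\}$---but the content is identical.
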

\begin{proof}
We use the induction on $t$. 
When $t=l$, the claim is obvious since 
\[C_{l,l}(\bp,\bk)=\frac{1}{m^{k_l}}C_{l,l+1}(\bp,\bk). \]

Let $t\geq l+1$. Then any element of $I(t-1)$ can be uniquely expressed as 
\[(k_1,\ldots,k_{t-2},i,k_t+1-i,k_{t+1},\ldots,k_n)\]
by $(k_1,\ldots,k_n)\in I(t)$ and $1\leq i\leq k_t$. 
Therefore, by the induction hypothesis for $t-1$, we have 
\begin{align*}
&\sum_{r=l}^tB_{l,r}(\bp)\sum_{\bk\in I(r)}C_{l,r}(\bp,\bk)\\
&=B_{l,t-1}(\bp)\sum_{\bk\in I(t-1)}
\frac{1}{m^{k_{t-1}}}C_{l,t}(\bp,\bk)
+B_{l,t}(\bp)\sum_{\bk\in I(t)}C_{l,t}(\bp,\bk)\\
&=B_{l,t}(\bp)\sum_{\bk\in I(t)}
\Biggl(\sum_{i=1}^{k_t}
\frac{1}{m^i}\frac{p_l+\cdots+p_{t-1}}{(m+p_l+\cdots+p_{t-1})^{k_t+1-i}}\\
&\hspace{100pt}
+\frac{1}{(m+p_l+\cdots+p_{t-1})^{k_t}}\Biggr)
C_{l,t+1}(\bp,\bk). 
\end{align*}
Now, from the equality 
\[\sum_{i=1}^K\frac{1}{x^i}\frac{y}{(x+y)^{K+1-i}}+\frac{1}{(x+y)^K}
=\frac{1}{x^K}, \]
our claim \eqref{eq:r<n} follows immediately. 
\end{proof}

By Lemma \ref{lem:r<n} for $t=n-1$, $D_l(\bp)$ can be written as 
\[D_l(\bp)=B_{l,n-1}(\bp)\sum_{\bk\in I(n-1)}
\frac{1}{m^{k_{n-1}}}C_{l,n}(\bp,\bk)
+B_{l,n}(\bp)\sum_{\bk\in I(n)}C_{l,n}(\bp,\bk). \]
By definition, we see that 
\[C_{l,n}(\bp,\bk)=\frac{1}{(m+p_l+\cdots+p_{n-1})^{k_n}}\]
and 
\begin{align*}
I(n-1)&=\bigl\{(1,\ldots,1,i,k-(n-2)-i)\bigm| 1\leq i\leq k-n\bigr\}, \\
I(n)&=\bigl\{(1,\ldots,1,1,k-(n-1))\bigr\}. 
\end{align*}
Hence a computation similar to the proof of Lemma \ref{lem:r<n} shows that 
\begin{align}
D_l(\bp)&=B_{l,n}(\bp)\Biggl(\sum_{i=1}^{k-n}\frac{1}{m^i}
\frac{p_l+\cdots+p_{n-1}}{(m+p_l+\cdots+p_{n-1})^{k-(n-2)-i}}\notag\\
&\hspace{120pt}
+\frac{1}{(m+p_l+\cdots+p_{n-1})^{k-(n-1)}}\Biggr)\notag\\
&=B_{l,n}(\bp)\frac{1}{m^{k-n}}\frac{1}{m+p_l+\cdots+p_{n-1}}\notag\\
&=B_{l,n-1}(\bp)\frac{1}{m^{k-n}}
\frac{1}{(p_l+\cdots+p_{n-1})(m+p_l+\cdots+p_{n-1})}\notag\\
&=B_{l,n-1}(\bp)\frac{1}{m^{k-n+1}}
\biggl(\frac{1}{p_l+\cdots+p_{n-1}}-\frac{1}{m+p_l+\cdots+p_{n-1}}\biggr).
\label{eq:D_l(p)}
\end{align}

\begin{prop}\label{prop:fix l}
For $l=1,\ldots,n$, 
\[\sum_{\lvert J\rvert=l}S(m,J)
=\frac{1}{m^{k-(n-1)}}
\sum_{\substack{m>q_1>\cdots>q_{l-1}\geq 1\\ 
1\leq q_l\leq\cdots\leq q_{n-1}\leq m}}\frac{1}{q_1q_2\cdots q_{n-1}}. \]
\end{prop}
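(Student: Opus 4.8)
The plan is to start from the formula \eqref{eq:|J|=l} together with the explicit expression \eqref{eq:D_l(p)} for $D_l(\bp)$, and to convert every elementary factor into a statement about partial sums. In $A_l$ I would set $s_i=p_1+\cdots+p_i$, so that the constraint $p_1+\cdots+p_{l-1}<m$ becomes $1\le s_1<\cdots<s_{l-1}<m$ and the summand becomes $1/(s_1\cdots s_{l-1})$; reversing the labels via $q_i=s_{l-i}$ rewrites $A_l$ as $\sum_{m>q_1>\cdots>q_{l-1}\ge 1}1/(q_1\cdots q_{l-1})$, which is exactly the strictly decreasing block of the asserted formula. Likewise, in the remaining sum over $\bp=(p_l,\ldots,p_{n-1})\in\N^{n-l}$ I would set $u_j=p_l+\cdots+p_j$, so that $B_{l,n-1}(\bp)=1/(u_l\cdots u_{n-2})$, the quantity $p_l+\cdots+p_{n-1}$ becomes $u_{n-1}$, and the summation range becomes $1\le u_l<\cdots<u_{n-1}$. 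All terms are positive, so these rearrangements are justified by Tonelli's theorem.

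After these substitutions the prefactor $1/m^{k-(n-1)}$ already matches, and one observes that the right-hand side of the proposition factors: there is no constraint linking $q_{l-1}$ and $q_l$, so the double sum splits into the decreasing block (which is the $A_l$ just identified) times the increasing block $\sum_{1\le q_l\le\cdots\le q_{n-1}\le m}1/(q_l\cdots q_{n-1})$. Thus everything reduces to the single-block identity
\[
\sum_{1\le u_l<\cdots<u_{n-1}}\frac{1}{u_l\cdots u_{n-2}}
\left(\frac{1}{u_{n-1}}-\frac{1}{m+u_{n-1}}\right)
=\sum_{1\le q_l\le\cdots\le q_{n-1}\le m}\frac{1}{q_l\cdots q_{n-1}},
\]
which equates a strictly increasing, \emph{unbounded} sum (tamed by the boundary factor) with a weakly increasing sum \emph{bounded} by $m$.

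This single-block identity is the heart of the matter, and I would prove it by induction on the number $d=n-l$ of variables. Writing $\Phi_d(m)$ for the left side and $\Psi_d(m)$ for the right side, the base case $d=1$ is the telescoping $\sum_{v\ge 1}(\frac{1}{v}-\frac{1}{m+v})=\sum_{c=1}^m 1/c$. For the inductive step I would fix the largest index $q_{n-1}=c$ to get the recursion $\Psi_d(m)=\sum_{c=1}^m\frac{1}{c}\,\Psi_{d-1}(c)$, substitute the inductive hypothesis $\Psi_{d-1}(c)=\Phi_{d-1}(c)$, interchange the order of summation, and collapse the inner sum over $c$ by means of the partial fraction $\frac{1}{c}(\frac{1}{v}-\frac{1}{c+v})=\frac{1}{v(c+v)}$. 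This turns $\Psi_d(m)$ into $\sum_{1\le v_1<\cdots<v_{d-1}}\frac{1}{v_1\cdots v_{d-1}}\sum_{c=1}^m\frac{1}{v_{d-1}+c}$, which is precisely what $\Phi_d(m)$ becomes after telescoping its largest variable $u_{n-1}$ over $u_{n-1}>u_{n-2}$.

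The main obstacle is locating this last identity and orienting the induction correctly. The tempting approach of peeling off the \emph{smallest} variable from $\Phi_d(m)$ fails: it forces one to carry an arbitrary lower bound $a$ through the strictly increasing block, and telescoping the top variable then yields a window $[a+1,\,a+m]$ whose upper endpoint drifts with $a$, so the weakly increasing side no longer has a fixed ceiling and the two sides cease to agree. The argument is rescued by running the induction from the bounded side via its recursion in the \emph{largest} part, keeping the lower bound fixed at $1$ and letting only the boundary parameter vary; the partial fraction above is what reconciles the two descriptions, and it must be applied \emph{before}, not after, carrying out the summation over $c$. Combining this identity with the change of variables for $A_l$ and the factorization of the target then yields Proposition \ref{prop:fix l}.
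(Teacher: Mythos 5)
Your argument is correct and is essentially the paper's own proof: you make the same reduction to the single-block identity \eqref{eq:fix l'} and establish it with the same two ingredients, namely the telescoping $\sum_{u>u'}\bigl(\tfrac{1}{u}-\tfrac{1}{m+u}\bigr)=\sum_{c=1}^{m}\tfrac{1}{u'+c}$ and the partial fraction $\tfrac{1}{v(c+v)}=\tfrac{1}{c}\bigl(\tfrac{1}{v}-\tfrac{1}{c+v}\bigr)$, merely packaged as an induction on the number of variables rather than as the paper's explicit successive summation over $p_{n-1},p_{n-2},\ldots,p_{l}$. The two organizations generate the same chain of equalities, so nothing further is needed.
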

\begin{proof}
First, it is obvious from the definition that 
\[A_l=\sum_{m>q_1>\cdots>q_{l-1}\geq 1}\frac{1}{q_1q_2\cdots q_{l-1}}. \]
Hence, by \eqref{eq:|J|=l} and \eqref{eq:D_l(p)}, 
it suffices to show 
\begin{multline}\label{eq:fix l'}
\sum_{\bp\in\N^{n-l}}B_{l,n-1}(\bp)
\biggl(\frac{1}{p_l+\cdots+p_{n-1}}-\frac{1}{m+p_l+\cdots+p_{n-1}}\biggr)\\
=\sum_{1\leq q_l\leq\cdots\leq q_{n-1}\leq m}
\frac{1}{q_lq_{l+1}\cdots q_{n-1}}. 
\end{multline}
This is done by computing the summation for $p_{n-1},p_{n-2},\ldots,p_l$ 
successively. In fact, the first summation is 
\begin{align*}
&\sum_{p_{n-1}=1}^\infty B_{l,n-1}(\bp)
\biggl(\frac{1}{p_l+\cdots+p_{n-1}}-\frac{1}{m+p_l+\cdots+p_{n-1}}\biggr)\\
&=\sum_{p_{n-1}=1}^m B_{l,n-1}(\bp)\frac{1}{p_l+\cdots+p_{n-1}}\\
&=\sum_{p_{n-1}=1}^m B_{l,n-2}(\bp)\frac{1}{p_{n-1}}
\biggl(\frac{1}{p_l+\cdots+p_{n-2}}-\frac{1}{p_l+\cdots+p_{n-1}}\biggr). 
\end{align*}
Then the second summation is 
\begin{align*}
&\sum_{p_{n-2}=1}^\infty\sum_{p_{n-1}=1}^m B_{l,n-2}(\bp)\frac{1}{p_{n-1}}
\biggl(\frac{1}{p_l+\cdots+p_{n-2}}-\frac{1}{p_l+\cdots+p_{n-1}}\biggr)\\
&=\sum_{p_{n-1}=1}^m\sum_{p_{n-2}=1}^{p_{n-1}}
B_{l,n-2}(\bp)\frac{1}{p_{n-1}}\frac{1}{p_l+\cdots+p_{n-2}}\\
&=\sum_{p_{n-1}=1}^m\sum_{p_{n-2}=1}^{p_{n-1}}
B_{l,n-3}(\bp)\frac{1}{p_{n-2}p_{n-1}}
\biggl(\frac{1}{p_l+\cdots+p_{n-3}}-\frac{1}{p_l+\cdots+p_{n-2}}\biggr)
\end{align*}
and so on. The last summation is 
\begin{multline*}
\sum_{p_l=1}^\infty\sum_{1\leq p_{l+1}\leq\cdots\leq p_{n-1}\leq m}
B_{l,l}(\bp)\frac{1}{p_{l+1}\cdots p_{n-1}}
\biggl(\frac{1}{p_l}-\frac{1}{p_l+p_{l+1}}\biggr)\\
=\sum_{1\leq p_l\leq\cdots\leq p_{n-1}\leq m}
\frac{1}{p_lp_{l+1}\cdots p_{n-1}}, 
\end{multline*}
which is the right-hand side of \eqref{eq:fix l'}. 
This proves Proposition \ref{prop:fix l}. 
\end{proof}

\section{Third step}
For $l=1,\ldots,n$, put 
\[Q_l=\biggl\{\bq=(q_1,\ldots,q_{n-1})\in\N^{n-1}\biggm|
\begin{array}{l}
m>q_1>\cdots>q_{l-1}\geq 1,\\ 
1\leq q_l\leq\cdots\leq q_{n-1}\leq m
\end{array}\biggr\}. \]
Then Proposition \ref{prop:fix l} says that 
\begin{equation}\label{eq:Q_l}
\sum_{\lvert J\rvert=l}S(m,J)
=\frac{1}{m^{k-(n-1)}}\sum_{\bq\in Q_l}\frac{1}{q_1q_2\cdots q_{n-1}}.
\end{equation}
On the other hand, it is easy to see that 
\[Q_1=\bigl\{(m,\ldots,m)\}\amalg (Q_1\cap Q_2) \]
and 
\[Q_l=(Q_l\cap Q_{l-1})\amalg (Q_l\cap Q_{l+1})\qquad (2\leq l\leq n), \]
where we put $Q_{n+1}=\emptyset$. Hence the inclusion-exclusion argument 
implies 
\begin{equation}\label{eq:sum Q_l}
\frac{1}{m^{n-1}}=\sum_{l=1}^n(-1)^{l-1}
\sum_{\bq\in Q_l}\frac{1}{q_1q_2\cdots q_{n-1}}. 
\end{equation}
Combining \eqref{eq:Q_l} and \eqref{eq:sum Q_l}, we obtain 
\begin{align*}
\sum_{\emptyset\ne J\subset\{1,\ldots,n\}}(-1)^{\lvert J\rvert-1}S(m,J)
&=\sum_{l=1}^n(-1)^{l-1}\sum_{\lvert J\rvert=l}S(m,J)\\
&=\frac{1}{m^{k-(n-1)}}\sum_{l=1}^n(-1)^{l-1}
\sum_{\bq\in Q_l}\frac{1}{q_1q_2\cdots q_{n-1}}\\
&=\frac{1}{m^k}. 
\end{align*}
Now the proof of Proposition \ref{prop:MainThm3} is complete. 

\bigbreak

\proof[Acknowledgements]
The author expresses his gratitude to Hirofumi Tsumura 
for introducing the conjecture of \cite{EMT10} in his talk.


\begin{thebibliography}{10}
\bibitem{AK04} 
T.~Arakawa, M.~Kaneko, 
\textit{On multiple $L$-values}, 
J.\ Math.\ Soc.\ Japan \textbf{56} (2004), 967--991. 

\bibitem{EMT10} 
D.~Essouabri, K.~Matsumoto, H.~Tsumura, 
\textit{Multiple zeta-functions associated with linear recurrence 
	sequences and the vectorial sum formula}, 
Canad.\ J.\ Math.\ \textbf{63} (2011), 241--276. 

\bibitem{G97} 
A.~Granville, 
\textit{A decomposition of Riemann's zeta function}, 
In: Analytic Number Theory, London Math.\ Soc.\ Lecture Note Ser.\ 247, 
Cambridge University Press, Cambridge, 1997, 95--101. 

\bibitem{L50} D.~E.~Littlewood, 
\textit{The theory of group characters and matrix representations of groups}, 
2nd ed., AMS Chelsea Publishing, 1950. 
\end{thebibliography}
\end{document}